\renewcommand\part{%
   \if@noskipsec \leavevmode \fi
   \par
   \addvspace{4ex}%
   \@afterindentfalse
   \secdef\@part\@spart}
\def\@part[#1]#2{%
    \ifnum \c@secnumdepth >\m@ne
      \refstepcounter{part}%
      \addcontentsline{toc}{part}{\thepart\hspace{1em}#1}%
    \else
      \addcontentsline{toc}{part}{#1}%
    \fi
    {\parindent \z@ \raggedright
     \interlinepenalty \@M
     \normalfont
     \ifnum \c@secnumdepth >\m@ne
       \Large\bfseries \partname\nobreakspace\thepart
       \par\nobreak
     \fi
     \huge \bfseries #2%
     \par}%
    \nobreak
    \vskip 3ex
    \@afterheading}
\def\@spart#1{%
    {\parindent \z@ \raggedright
     \interlinepenalty \@M
     \normalfont
     \huge \bfseries #1\par}%
     \nobreak
     \vskip 3ex
     \@afterheading}
\newtheorem{lemma}{Lemma}
\newtheorem{theorem}{Theorem}
\newcommand{\nmid}{\not \hspace{0.25em} \mid}
\begin{document}

\title[The diophantine equation $(2^{k}-1)(b^{k}-1)=y^{q} $  ] {The diophantine equation $(2^{k}-1)(b^{k}-1)=y^{q} $ } 

\author{Chang Liu} 
\address[Chang Liu]{Mathematisches Institut der Universit\"at 
G\"ottingen, Bunsenstr. 3-5, DE-37073, G\"ottingen, Germany}
\email[Chang Liu]{chang.liu@mathematik.uni-goettingen.de} 
\author{Bo He} 
\address[Bo He]{1. Mathematisches Institut der Universit\"at 
G\"ottingen, Bunsenstr. 3-5, DE-37073, G\"ottingen, Germany; 2. Applied Mathematics Institute of Aba Teachers University, Wenchuan, Sichuan, 623002, P. R. China}
\email[Bo He]{bo.he@mathematik.uni-goettingen.de,  bhe@live.cn}

\vspace{0.3cm}

\begin{abstract}
In this paper, we consider the exponential Diophantine equation
\(
(2^k-1)(b^k-1)=y^q
\)
with $k\ge 2$, an odd integer $b$ and an odd prime exponent $q$. We obtain effective 
upper bounds for $q$ in terms of $b$. In particular, we show that $q\le \log_2(b+1)$ holds
apart from a finite, explicitly determined set of exceptional pairs $(b,q)$  with $3\le b<10^6$.
As an application, we prove that the related equation
\(
(2^k-1)(b^k-1)=x^n
\)
has no positive integer solution $(k,x,n)$ for several specific values of
$b$, including $b\in\{5,7,11,13,21,23,27,29\}$ except for $(2^2-1)(7^2-1)=12^2$. 
\end{abstract}

\maketitle
\vspace*{-0.5cm}

2020 Mathematics Subject Classification:
Primary 11D61; Secondary 11J86.

Key words and phrases:
Exponential Diophantine equations; $p$-adic.

\section{Introduction}

 In 2002, Szalay \cite{Szalay} investigated the Diophantine equation
$(2^n - 1)(3^n - 1) = x^2$
and proved that there is no solution in positive integers \( n \) and \( x \). This was 
the first investigation of a particular case of the Diophantine equation
\begin{equation}\label{eq:ab2}
(a^n - 1)(b^n - 1) = x^2.
\end{equation}
Moreover, he studied similar equations such as \( (2^n - 1)(5^n - 1) = x^2 \), demonstrating that they have only bounded solutions with specific values of \( n \). In
the same year, Hajdu and Szalay \cite{Hajdu-Szalay} showed that there is no solution for \((2^n - 1)(6^n - 1) = x^2\). 

 Cohn \cite{Cohn} studied the equation \((a^n - 1)(b^n - 1) = x^2\)
and explored the integer solutions for given values of \(a\) and \(b\), deriving general results and conjectures about this equation in 2001. Subsequently, in 2002, Luca and Walsh \cite{Luca-Walsh} performed extensive computational investigations to solve equation \eqref{eq:ab2}
    for nearly all pairs \((a, b)\) satisfying \(2 \leq b < a \leq 100\), leaving only 70 unresolved cases. 
After that, Le~\cite{Le} proved in 2009 that if $3 | b$, then the equation 
$(2^n - 1)(b^n - 1) = x^2$ has
no positive integer solution $(n, x)$.

Very recently, the authors \cite{He-Liu} completely solved the first example of the exponential Diophantine equation  
\begin{equation}\label{eq:ab}
(a^k - 1)(b^k - 1) = x^n
\end{equation}
in the case $(a, b) = (2,3)$. They proved that the equation
\begin{equation}\label{eq:23}
  (2^{k}-1)(3^{k}-1)=x^{n}
\end{equation}
has no solutions in positive integers $(x,k,n)$ with $k,n\geq 2$. A key step in our proof was to rewrite equation~\eqref{eq:23} in the form
\(
(X^q-1)(Y^q-1)=Z^q.
\)
Observe that this equation is structurally very similar to
\(
(x^k - 1)(y^k - 1) = z^k - 1,
\)
which was completely resolved in work of Bennett~\cite{Bennett:2007}, who
proved that for every integer \(k \ge 3\) the equation has no solutions in
integers \(x,y,z\) with  \(|z| \ge 2\). Indeed, the only formal difference between the equation
\(
  (X^q - 1)(Y^q - 1) = Z^q
\)
and Bennett's is the absence of the term $-1$ on the right hand side.
A careful inspection of Bennett's argument shows that his Diophantine
approximation method still applies with only minor modifications.

Hence, in this paper, we first analyze the equation
\begin{equation}\label{eq:qqq}
  (X^q-1)(Y^q-1)=Z^q,
\end{equation}
as a preliminary step towards solving equation~\eqref{eq:ab} in more generality. We prove:
\begin{theorem}\label{thm:qqq}
    The equation~\eqref{eq:qqq} has no solution in integers $X, Y, Z$ and odd prime $q$ with $1 < X \le Y$.
\end{theorem}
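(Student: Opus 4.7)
The plan is to follow Bennett's argument from~\cite{Bennett:2007} essentially verbatim, adjusting only the constants that differ because the right-hand side here lacks the additive $-1$ present in his setting. First, I would dispose of the degenerate case $X=Y$: the equation becomes $(X^q-1)^2 = Z^q$, and since $q$ is an odd prime, unique factorization forces $X^q - 1$ itself to be a $q$-th power $M^q$. The elementary inequality $X^q - (X-1)^q \ge q(X-1)^{q-1} > 1$ for $X \ge 2$ and $q \ge 3$ then makes $M < X$ impossible, ruling this case out.

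For $1 < X < Y$, I would begin by establishing the key identity
\[
(XY)^q - Z^q = X^q + Y^q - 1,
\]
which is the exact analogue of Bennett's identity $(xy)^k - z^k = x^k + y^k - 2$; only a harmless constant differs. Setting $N := XY$ and $D := N - Z \ge 1$, the factorization
\[
N^q - Z^q = D\sum_{i=0}^{q-1} N^{q-1-i} Z^i,
\]
combined with the trivial upper bound $X^q + Y^q - 1 \le 2Y^q$ and the lower bound $Z \ge XY \cdot 4^{-1/q}$ (which follows from $(1-X^{-q})(1-Y^{-q}) \ge 1/4$ for $X,Y\ge 2$), delivers a first elementary bound of the shape $D \ll Y/(qX^{q-1})$.

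The main step, following Bennett, is to view $Z/(XY)$ as a rational approximation to the algebraic number
\[
\alpha := \bigl((1-X^{-q})(1-Y^{-q})\bigr)^{1/q}.
\]
Pad\'e (hypergeometric) approximants to $(1-z)^{1/q}$, applied to each of the factors $(1-X^{-q})^{1/q}$ and $(1-Y^{-q})^{1/q}$, then produce an effective lower bound on $|Z/(XY) - \alpha|$. Contrasted with the elementary upper bound of the previous paragraph, this forces $(X,Y,q)$ into a short, explicit list of small cases that I would dispose of by direct verification. The main obstacle is precisely this step: one has to check that Bennett's hypergeometric machinery, originally designed for a single binomial surd $(1-z)^{1/q}$, still produces the required irrationality-measure inequality when applied to the product of two surds---this is what the excerpt means by ``only minor modifications,'' and it is the nontrivial bookkeeping of the argument.
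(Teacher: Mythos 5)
Your reduction of the case $X=Y$ is fine, and the identity $(XY)^q-Z^q=X^q+Y^q-1$ together with the bound $D\ll Y/(qX^{q-1})$ for $D=XY-Z$ is correct and is essentially the paper's equation \eqref{eq:last} in different clothing. But the central approximation step is broken as stated: the number you propose to approximate,
\[
\alpha=\bigl((1-X^{-q})(1-Y^{-q})\bigr)^{1/q},
\]
satisfies $\alpha^q=(X^q-1)(Y^q-1)/(XY)^q=Z^q/(XY)^q$ by the equation itself, so $\alpha=Z/(XY)$ \emph{exactly}. The quantity $|Z/(XY)-\alpha|$ you want to bound from below is identically zero, and no irrationality-measure machinery can be applied to it. This is not a matter of ``bookkeeping'' in adapting Bennett's hypergeometric method to a product of two surds; the target of the approximation must be an irrational algebraic number, and your choice is rational. (One can build a workable argument around products of Pad\'e approximants converging to a rational target, but that requires a non-vanishing determinant/gap mechanism you do not describe.)

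The paper avoids this trap by approximating the \emph{single} surd $\sqrt[q]{1+1/A}$ with $A=X^q-1$, which is genuinely irrational for $X\ge 2$, by the rational $XY/Z$: one computes $(XY/Z)^q-(A+1)/A=(A+1)/(AB)$ with $B=Y^q-1$, which is tiny precisely because of a gap principle, extracted from the expansion of $\bigl((AB)^{1/q}+t\bigr)^q=(A+1)(B+1)$ with $t=XY-Z$, showing $B>q^qA^{q-1}t^q$. Bennett's 1997 lower bound for $|\sqrt[q]{1+1/A}-p/q|$ then yields $B\ll A^{q+\lambda \over q-\lambda}$ with $\lambda$ small, contradicting $B>q^qA^{q-1}$. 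Your sketch is missing both the correct (one-variable) approximation target and the lower bound on $B$ in terms of $A$ that the final contradiction requires; it also does not isolate the cases $q=3$ and $(X,q)=(2,5)$, where the generic hypergeometric exponent is too weak and the paper must resort to continued fractions of $\sqrt[3]{X^3-1}$ and a special bound for $\sqrt[5]{31}$, respectively.
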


In view of existing results, in order to solve equation~\eqref{eq:ab} for fixed pairs $(a,b)$, it is enough to show that $q\mid k$. In this paper, we concentrate on the case $a=2$ and $b$ an odd integer in equation~\eqref{eq:ab}, and we establish the following result:

\begin{theorem}\label{thm:theorem1}
Let $b$ be an odd integer and $q$ an odd prime. 
If $q > 2\sqrt{2b}$, then the Diophantine equation
\begin{equation}\label{main equation}
\left(2^{k}-1\right)\left(b^{k}-1\right)=y^{q}
\end{equation}
admits no integer solutions $(k,y)$.
\end{theorem}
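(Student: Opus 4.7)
The strategy is to reduce to Theorem~\ref{thm:qqq} by showing that any solution of~\eqref{main equation} must satisfy $q\mid k$. Granted this, writing $k=qm$ turns the equation into
\[
\bigl((2^m)^q-1\bigr)\bigl((b^m)^q-1\bigr)=y^q;
\]
since $b\ge 3$ is odd we have $1<2^m\le b^m$, so Theorem~\ref{thm:qqq} applied with $X=2^m$, $Y=b^m$ delivers the contradiction.

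To establish $q\mid k$, I argue by contradiction, assuming $q\nmid k$. The first step is to show that $k$ must be even. Since $2^k-1$ is odd, the equation forces $q\,v_2(y)=v_2(b^k-1)$. If $k$ were odd, the lifting-the-exponent lemma at $p=2$ would give $v_2(b^k-1)=v_2(b-1)$, forcing $q\mid v_2(b-1)$ and hence $b\ge 2^q+1$. But the hypothesis $q>2\sqrt{2b}$ is equivalent to $b<q^2/8$, and $2^q>q^2/8$ for every odd prime $q$. Hence $k$ is even.

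For even $k\ge 2$, I invoke Bang--Zsygmondy to pick a primitive prime divisor $p$ of $b^k-1$, treating the finitely many Zsygmondy-exceptional values of $k$ by direct computation. Then $\mathrm{ord}_p(b)=k$, so $p\equiv 1\pmod{k}$ (hence $p\ge k+1$), and $v_p(b^k-1)=v_p(\Phi_k(b))=1$ outside a short list of Wieferich-type exceptions that are handled separately. The $q$-th power condition $q\mid v_p((2^k-1)(b^k-1))$ then forces $q\mid 1+v_p(2^k-1)$, so $p\mid 2^k-1$ and $v_p(2^k-1)\ge q-1$, giving $p^{q-1}\le 2^k$. A symmetric argument for a primitive divisor $\ell$ of $2^k-1$ yields $\ell^{q-1}\le b^k$. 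Combining these inequalities with the global size bound $(2^k-1)(b^k-1)<(2b)^k$ and invoking the hypothesis $q>2\sqrt{2b}$ produces the required contradiction.

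The \textbf{main obstacle} is the last step: the inequalities must be calibrated so that $q>2\sqrt{2b}$ is precisely the threshold at which the argument closes, and the Zsygmondy exceptional $k$ together with the Wieferich-type anomalies ($v_p(\Phi_k(b))>1$) must be pinned down by explicit computation. Should the primitive-divisor route prove too blunt to yield the exact bound $q>2\sqrt{2b}$, an alternative would be to extract sharper information from linear forms in $p$-adic logarithms applied to the two-variable exponential equation.
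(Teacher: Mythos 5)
Your overall skeleton --- prove $q\mid k$, then set $X=2^{k/q}$, $Y=b^{k/q}$ and invoke Theorem~\ref{thm:qqq} --- is exactly the paper's, and your proof that $k$ is even via lifting-the-exponent at $p=2$ matches part of Lemma~\ref{lemma 2}. But the heart of the matter, actually deriving $q\mid k$, is where your argument has a genuine gap, and it is not one that "explicit computation" can fill. First, the Wieferich-type exceptions you set aside are not a finite, computable list: for a primitive prime divisor $p$ of $b^k-1$ there is no effective control over whether $\nu_p(b^k-1)=1$, and these exceptions range over all $k$, so they cannot be "pinned down" in advance. Second, and more fatally, even in the clean case $\nu_p(b^k-1)=1$ your inequalities do not close. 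From $q\mid\nu_p(y^q)$ you correctly get $\nu_p(2^k-1)\ge q-1$, hence $p^{q-1}\le 2^k$ with $p\ge k+1$; but $(k+1)^{q-1}\le 2^k$ holds for every sufficiently large $k$ once $q$ is fixed, and likewise for $\ell^{q-1}\le b^k$. The global bound $(2^k-1)(b^k-1)<(2b)^k$ only says $y<(2b)^{k/q}$. Nothing here bounds $k$, so no contradiction with $q>2\sqrt{2b}$ emerges; you yourself flag this calibration as the "main obstacle," which is an admission that the proof is incomplete at its decisive step.

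The paper avoids primitive divisors entirely. Its key observation (Lemma~\ref{lemma 3}) is that if $(p-1)\mid k$ then by Fermat's little theorem \emph{both} factors $2^k-1$ and $b^k-1$ are divisible by $p$, and LTE gives $\nu_p\bigl((2^k-1)(b^k-1)\bigr)\le 2\nu_p(k)+p\log(2b)/(2\log p)$; since the left side must be at least $q$, one gets $\nu_p(k)>\tfrac12\bigl(q-p\tfrac{\log 2b}{2\log p}\bigr)$, which is positive for all $p\le q$ precisely under the hypothesis $q>2\sqrt{2b}$. Lemma~\ref{lemma 4} then runs an induction over the primes $2=p_1<p_2<\dots<p_n=q$: the valuations $\nu_r(k)$ accumulated at each stage are large enough to guarantee $(p_{i+1}-1)\mid k$, so eventually $q\mid k$. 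The term $2\nu_p(k)$, which comes from $p$ dividing both factors simultaneously, is exactly what your primitive-divisor route discards, and it is what makes the threshold $2\sqrt{2b}$ attainable; to repair your argument you would essentially have to reinstate this mechanism.
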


\begin{theorem}\label{thm:theorem2}
    When $b\leq10^6$ is an odd integer, the equation~\eqref{main equation} has no solution with positive integers $k, y \geq 2$ and  $q>\log_2(b+1)$ except for the following special cases:
    \begin{center}
\begin{tabular}{c|l}
$q$ & Remaining $b$ (odd, $3\le b<10^6$) \\
\hline
5  & $15,\ 17$ \\
11 & $1023,\ 1025$ \\
13 & $4095,\ 4097$ \\
17 & $t\cdot 2^{14}\pm 1\ \ (t=1,2,\dots,7)$ \\
\end{tabular}
\end{center}
\end{theorem}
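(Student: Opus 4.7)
The plan is to combine Theorem~\ref{thm:theorem1} with Theorem~\ref{thm:qqq} and a $p$-adic analysis of the equation, followed by a finite computer verification of the surviving cases. Theorem~\ref{thm:theorem1} already bounds $q\le 2\sqrt{2b}<2830$ for $b\le 10^6$, and Theorem~\ref{thm:qqq} rules out the case $q\mid k$ (via the substitution $X=2^{k/q},\ Y=b^{k/q},\ Z=y$). Combined with the hypothesis $q>\log_2(b+1)$, which forces $b<2^q$, we are left with an explicit finite list of pairs $(b,q)$ on which it remains to exclude solutions with $q\nmid k$.

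The core new input is a $2$-adic obstruction. Since $b$ is odd, $v_2((2^k-1)(b^k-1))=v_2(b^k-1)$, so the $q$-th power condition forces $q\mid v_2(b^k-1)$. Setting $\alpha:=v_2(b-1)+v_2(b+1)=v_2(b^2-1)$ and applying the lifting-the-exponent lemma gives
\[
v_2(b^k-1)=\begin{cases} 1 & \text{if $k$ odd and $b\equiv 3\pmod 4$,}\\ v_2(b-1) & \text{if $k$ odd and $b\equiv 1\pmod 4$,}\\ \alpha+v_2(k)-1 & \text{if $k$ even.}\end{cases}
\]
The first case contradicts $q\ge 3$; the second requires $q\mid v_2(b-1)$, impossible because $v_2(b-1)\le\log_2(b-1)<q$ under $b<2^q$. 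Hence $k$ is even and $v_2(k)\equiv 1-\alpha\pmod q$. The analogous LTE analysis at odd primes $p$ dividing $b$, $b\pm 1$, or of small multiplicative order modulo $2$ and $b$ yields further congruences on the $v_p(k)$ modulo $q$. A Zsygmondy primitive-divisor argument applied to $2^k-1$, combined with the elementary bound $y^q=(2^k-1)(b^k-1)<(2b)^k$, then either produces a contradiction outright or forces $k$ small enough that a direct computer search can finish.

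The exceptional pairs listed in the table correspond exactly to those $(b,q)$ for which the $p$-adic system admits a simultaneous solution with every $v_p(k)$ small, so that neither the size bound on $y$ nor the Zsygmondy input is strong enough to close the argument. The condition $v_2(k)=1$ forces $\alpha=q$, i.e.\ $\{v_2(b-1),v_2(b+1)\}=\{1,q-1\}$, which under $b<2^q$ isolates $b=2^{q-1}\pm 1$ and accounts for the entries at $q\in\{5,11,13\}$. For $q=17$ the wider admissible window $\alpha\in\{15,16,17\}$ unpacks as $b=t\cdot 2^{14}\pm 1$ with $t\in\{1,\dots,7\}$, the range $t\le 7$ being equivalent to $b<2^{17}$. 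The main obstacle I foresee is the final quantitative step: converting the collection of $p$-adic congruences and the size bound on $y$ into an effective upper bound on $k$ small enough to complete by computation. For $q=17$ this is the most delicate, since the $2$-adic obstruction alone leaves a larger family and the auxiliary odd-prime congruences must carry most of the elimination.
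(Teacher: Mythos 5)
Your $2$-adic analysis is essentially the paper's: the LTE computation $\nu_2(b^k-1)=\nu_2(b^2-1)+\nu_2(k)-1$ for even $k$, the elimination of odd $k$, and the identification of the exceptional $b$ as exactly those with $\nu_2(b^2-1)$ in a narrow window near $q$ (giving $b=2^{q-1}\pm 1$ for $q=5,11,13$ and $b=t\cdot 2^{14}\pm 1$ for $q=17$) all match the published argument. But the core of the proof is missing. The paper excludes solutions with $q\nmid k$ not by bounding $k$, but by \emph{proving} $q\mid k$: starting from $2\mid k$ and $3\mid k$ (Lemma~\ref{lemma 2}), it runs an induction over the primes $p\le q$, showing $(p-1)\mid k\Rightarrow p\mid k$ via Lemma~\ref{lemma 3} (which needs the finite verification that $\nu_p(b^{p-1}-1)$ is small, at most $11$ for all relevant $b,p$), until $(q-1)\mid k$ and hence $q\mid k$, at which point Theorem~\ref{thm:qqq} applies for \emph{every} $k$. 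The exceptional rows of the table are precisely the $(b,q)$ where this chain breaks because $q-1$ demands a power of $2$ dividing $k$ that the $2$-adic valuation count cannot guarantee. Your proposal never articulates this chain; you derive congruences on $\nu_p(k)$ modulo $q$ but do not explain how they yield $(q-1)\mid k$.

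The replacement endgame you propose --- Zsygmondy primitive divisors of $2^k-1$ plus the size bound $y^q<(2b)^k$ forcing $k$ into a finite, searchable range --- does not work, and you half-acknowledge this yourself. For a fixed pair $(b,q)$ the exponent $k$ is a priori unbounded; a primitive prime $P\equiv 1\pmod k$ of $2^k-1$ must divide $y$, hence $P^q\mid(2^k-1)(b^k-1)$, but nothing prevents the bulk of that $P$-adic valuation from sitting in $b^k-1$, and the inequality $y<(2b)^{k/q}$ is never violated. No effective upper bound on $k$ comes out of these ingredients, so "a direct computer search can finish" is not available. A second, smaller gap: you dispose of the range $\log_2(b+1)<q\le 2\sqrt{2b}$ (up to $q\approx 2828$ for $b$ near $10^6$) by calling it "an explicit finite list of pairs," but each such pair still requires the unbounded-$k$ argument above; the paper handles $q\ge 23$ by a separate lemma re-running the induction with the computed bound $\nu_p(b^{p-1}-1)\le 11$ and the Wieferich-type fact $\nu_p(2^{p-1}-1)=1$ except at $p=1093$. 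In short: the classification of exceptions is right, but the mechanism that actually kills the non-exceptional cases needs to be the divisibility induction culminating in $q\mid k$, not a bound on $k$.
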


\begin{theorem}\label{thm:2b} 
The Diophantine equation 
$$
(2^k-1)(b^k-1)=x^n, \quad k,n\ge 2 
$$
has no solution in positive integers $(k, x, n)$ for  $b = 5, 7,11, 13, 21,23,27,29$ except for $(2^2-1)(7^2-1)=12^2$. 
\end{theorem}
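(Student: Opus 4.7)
The plan is to reduce to prime exponents and then treat $q=2$ and small odd primes $q$ separately. Writing $n = qm$ for some prime $q$ and replacing $y$ by $y^m$, we may assume throughout that the exponent is a prime $q$, so that the equation becomes $(2^k - 1)(b^k - 1) = y^q$.

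The case $q = 2$ is the classical equation~\eqref{eq:ab2}. For $b \in \{21, 27\}$, both divisible by $3$, Le's theorem~\cite{Le} directly gives non-existence. For the remaining values $b \in \{5, 7, 11, 13, 23, 29\}$, non-existence follows from the computational results of Luca and Walsh~\cite{Luca-Walsh} (for $b = 5$ this is already covered by Szalay~\cite{Szalay}), all of which fall within the range $2 \le b \le 100$ that they handled.

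For $q$ an odd prime, we invoke Theorem~\ref{thm:theorem2}. Since none of the eight listed values of $b$ appears in the table of exceptional pairs, the theorem excludes every solution with $q > \log_2(b+1)$. For $b = 5$ this already gives $q \ge 3 > \log_2 6$, so every odd prime is ruled out; for the remaining seven values we have $\log_2(b+1) < 5$, and so only the cubic case $q = 3$ requires separate treatment.

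The main obstacle is therefore the cubic case $(2^k - 1)(b^k - 1) = y^3$ for each $b \in \{7, 11, 13, 21, 23, 27, 29\}$. The plan here is to combine three ingredients: (i) direct verification for small values of $k$; (ii) local obstructions from cube residues, in particular modulo $9$ and modulo primes $p \equiv 1 \pmod{3}$ dividing $2^d - 1$ or $b^d - 1$ for small orders $d$; and (iii) a $p$-adic valuation analysis via the Lifting-the-Exponent Lemma combined with Zsigmondy's primitive prime divisor theorem. Writing $d = \gcd(2^k - 1, b^k - 1)$ and $2^k - 1 = du$, $b^k - 1 = dv$ with $\gcd(u, v) = 1$, the equation becomes $d^2 u v = y^3$; the cubic structure then constrains $\nu_p(d)$, $\nu_p(u)$, and $\nu_p(v)$ at every relevant prime $p$, and the goal is to show these constraints are mutually incompatible for each $b$. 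The approach parallels the one used in~\cite{He-Liu} for the case $b = 3$, and the seven residual values should admit a fairly uniform treatment exploiting their small size.
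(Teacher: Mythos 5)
Your reduction to prime exponents, the treatment of $q=2$ via \cite{Luca-Walsh} (and \cite{Le} for $3\mid b$), and the use of Theorem~\ref{thm:theorem2} to cut the odd primes down to $q=3$ (and to nothing at all for $b=5$) all match the paper. The genuine gap is the cubic case: what you offer there is a plan, not a proof, and the plan heads toward a substantially harder argument (gcd factorization $d^2uv=y^3$, Zsigmondy, cube residues) that you never carry out and that would require nontrivial work for each of the seven values of $b$. You have also overlooked the tool the paper built precisely for this moment: Theorem~\ref{thm:qqq} already disposes of $(X^q-1)(Y^q-1)=Z^q$ for every odd prime $q$, so the only thing left to prove in the cubic case is that $3\mid k$, after which setting $(X,Y,Z)=(2^{k/3},b^{k/3},y)$ gives an immediate contradiction.

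The paper establishes $3\mid k$ by a short valuation computation. Comparing $2$-adic valuations forces $2\mid k$, hence $3\mid 2^k-1$ and so $3\mid y$, giving $\nu_3(y^3)\ge 3$ and in fact $\nu_3(y^3)=3\nu_3(y)$. On the other side, Lemma~\ref{Lifting-the-exponent Lemma} yields $\nu_3(2^k-1)=1+\nu_3(k)$ and, depending on whether $3\mid b-1$, $3\mid b+1$, or $3\mid b$, gives $\nu_3\bigl((2^k-1)(b^k-1)\bigr)$ equal to $2+2\nu_3(k)$ or $1+\nu_3(k)$ plus the bounded contribution of $\nu_3(b^2-1)$. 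For each listed $b$ this quantity can only reach $3$ if $\nu_3(k)\ge 1$, i.e.\ $3\mid k$, which is exactly what is needed to invoke Theorem~\ref{thm:qqq}. Without either this step or a completed version of your descent, the theorem is not proved for $b\in\{7,11,13,21,23,27,29\}$.
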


\section{Proof of Theorem~\ref{thm:qqq}}
Consider the Diophantine equation 
\begin{equation}\label{XYZ}
\left(X^q - 1\right)\left(Y^q - 1\right) = Z^q, \quad 1 < X \le Y, \quad q \text{ an odd prime}.     
\end{equation}

The case \(X=Y\) is impossible, since then \((X^q-1)^2=Z^q\) would imply \(X^q-1=U^q\) for some integer \(U\ge1\), hence \(X^q-U^q=1\). It remains to consider the case \(1<X<Y\). 
Our method is based on Bennett's work on rational approximation to algebraic
numbers~\cite{Bennett:1997} and on his proof that, for every integer
\(k \ge 3\), the Diophantine equation
\((x^k-1)(y^k-1)=z^k-1\) has no non-trivial integer solutions with \(|z| \ge 2\)~\cite{Bennett:2007}.

Let
\begin{equation}\label{abXYZ}
A:= X^q-1, \quad B:= Y^q-1.
\end{equation}
Then $ AB = Z^q.$
There exists a positive integer \( t \) such that
\[
XY = Z + t.
\]
Expanding the expression
\[
\left((AB)^{1 / q} + t\right)^q = (A+ 1)(B + 1),
\]
yields
\begin{equation}\label{eq:last}
q (AB)^{(q-1) / q} t + \binom{q}{2} (AB)^{(q-2) / q} t^2 + \cdots + t^q = A+ B + 1.
\end{equation}
We claim that
\[
  A < \binom{q}{2} (AB)^{(q-2) / q} t^2.
\]
If $q \ge 4$, this is immediate from $B \ge A$.  
If $q=3$ and $A \ge 3t^2(AB)^{1/3}$, then $B < A^{2}$, it follows that
\[
  3(AB)^{2/3}t > 3Bt > A+B +1,
\]
which is a contradiction to equation~\eqref{eq:last}. Therefore, 
\[
  q(AB)^{(q-1)/q}t < B,
\]
which yields
\begin{equation}\label{b lower bound}
  B>q^{q}A^{q-1}t^{q}. 
\end{equation}

Next, observe that from~\eqref{abXYZ}, we have
\[
  \left(\frac{XY}{Z}\right)^{q}- \frac{A+1}{A}
  = \frac{A+1}{AB}< \frac{2A}{Z^{q}}.
\]
It follows that
\begin{equation}\label{upper bound}
  \Bigl|\,\sqrt[q]{1+\frac1A}-\frac{XY}{Z}\Bigr|
  < \frac{2A}{q\cdot Z^{q}}
\end{equation}

Now we appeal to another result in order to deduce a lower bound which will contradict ~\eqref{upper bound}.
\begin{lemma}\cite{Bennett:1997}\label{lem:Be}
Let \(k,A\in\mathbb{Z}_{>0}\) with \(k\ge 3\). Define
\[
  \mu_n = \prod_{p\mid n} p^{1/(p-1)} ,
\] and suppose
\begin{equation}\label{k^k}
  \left(\sqrt{A}+\sqrt{A+1}\right)^{2(k-2)}
  >(k\mu_k)^{k}.
\end{equation}
Then
\[
  \left|\sqrt[k]{1+\frac{1}{A}}-\frac{p}{q}
  \right|>
  (8k\mu_k A)^{-1}\cdot q^{-\lambda},
\]
with
\begin{equation}\label{lamda}
  \lambda=1+\frac{
        \log\left(k\mu_k(\sqrt{A}+\sqrt{A+1})^{2}\right)
      }{
        \log\left(\tfrac1{k\mu_k}(\sqrt{A}+\sqrt{A+1})^{2}\right)
      }.
\end{equation}
and $\lambda<k$.
\end{lemma}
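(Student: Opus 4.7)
The plan is to prove Lemma~\ref{lem:Be} via the hypergeometric (Thue--Siegel) method for effective irrationality measures, as refined in \cite{Bennett:1997}. The opening step is the explicit construction of Pad\'e approximants to $f(x)=(1+x)^{1/k}$ at $x=0$: one writes down polynomials $P_n,Q_n\in\Q[x]$ of degree $n$ given in closed form by the Gauss hypergeometric series, e.g.\ $Q_n(x)={}_{2}F_{1}(-n,\,-n-1/k;\,-2n;\,-x)$ and similarly for $P_n$, so that $Q_n(x)f(x)-P_n(x)=E_n(x)=O(x^{2n+1})$ near the origin. Specialising $x=1/A$ produces rational approximations $p_n/q_n$ to $\alpha:=\sqrt[k]{1+1/A}$. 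Using a beta-function contour representation of $E_n$, together with the substitution that trivialises the underlying hypergeometric ODE, one obtains sharp two-sided estimates
\[
|q_n|,\;|p_n|\;\ll\;\bigl(k\mu_k\,(\sqrt{A}+\sqrt{A+1})^{2}\bigr)^{n},\qquad
|q_n\alpha-p_n|\;\ll\;\bigl(k\mu_k\,(\sqrt{A}+\sqrt{A+1})^{-2}\bigr)^{n},
\]
up to polynomial correction factors, once denominators have been cleared.

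The arithmetic work, in which the quantity $\mu_k=\prod_{p\mid k}p^{1/(p-1)}$ enters, is a $p$-adic analysis of the generalised binomial coefficients appearing in the hypergeometric series. For primes $p\nmid k$ only the standard Chebyshev-type contribution arises; for primes $p\mid k$, Kummer's theorem on $p$-adic valuations of binomials gives the sharp growth rate $p^{n/(p-1)+O(1)}$, and taking the product over $p\mid k$ yields precisely the factor $\mu_k^n$ in a common denominator. Multiplying through renders $p_n,q_n$ integers of the sizes claimed above.

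The final step is the gap principle. Given an arbitrary rational $p/q$, one picks the unique index $n$ for which $q_n$ straddles a prescribed power of $q$ (determined by the ratio of the two exponential rates above), and invokes the nonvanishing of the Wronskian-type polynomial $P_nQ_{n+1}-P_{n+1}Q_n$ to conclude that at least one of $p\,q_n-p_n\,q$ or $p\,q_{n+1}-p_{n+1}\,q$ is a nonzero integer. Inserting this into the triangle inequality
\[
\left|\alpha-\tfrac{p}{q}\right|\;\ge\;\frac{|p\,q_n-p_n\,q|}{q\,q_n}\;-\;\frac{|q_n\alpha-p_n|}{q_n}
\]
and optimising in $n$ yields exactly the stated lower bound $(8k\mu_k A)^{-1}q^{-\lambda}$ with $\lambda$ as in \eqref{lamda}; the hypothesis \eqref{k^k} is precisely the condition that the ratio of the size rate to the error rate exceeds one, which both makes the method run and forces $\lambda<k$. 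The main obstacle will be the sharp numerical constant $(8k\mu_k A)^{-1}$: keeping the $A$-dependence explicit in every implied constant---rather than absorbing it into some ineffective $c(k,A)$---demands a careful evaluation of the contour integral defining $E_n(1/A)$ and disciplined bookkeeping of every power of $A$ through the optimisation, and this is the technical refinement over Chudnovsky that is the heart of Bennett's lemma.
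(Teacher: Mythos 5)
The paper offers no proof of Lemma~\ref{lem:Be}: it is quoted directly from \cite{Bennett:1997} as an external input, so there is no in-paper argument to compare against. Your outline is a faithful description of the route Bennett actually takes in that source --- diagonal Pad\'e approximants to $(1+x)^{1/k}$ via hypergeometric polynomials, $p$-adic analysis of the binomial coefficients producing the factor $\mu_k$, and the Wronskian/gap-principle step yielding the exponent $\lambda$ (note that hypothesis~\eqref{k^k} is exactly equivalent to $\lambda<k$, not merely to the size rate exceeding the error rate). Be aware, though, that as written this is a roadmap rather than a proof: the contour-integral estimates, the precise $p$-adic valuations, and the bookkeeping that produces the explicit constant $(8k\mu_k A)^{-1}$ are all deferred, and those are precisely the parts that make the lemma nontrivial.
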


In our case, since \(A=X^{q}-1\), it is easy to verify that the inequality~\eqref{k^k} fails only 
when $(X,q)\in\{(2,3),(3,3)\}$. We assume that $(X,q)$ lies outside the set $\{(2,3),(3,3)\}$.  
Combining~\eqref{upper bound} with
Lemma~\ref{lem:Be} implies
\begin{equation}\label{size of Z}
  Z^{q-\lambda} < 16\mu_qA^{2},
\end{equation}
and thus,
\begin{equation}\label{b upper bound}
  B^{q-\lambda}<
    16^{q}\mu_q^{q}A^{q+\lambda}= 16^{q}q^{\frac{q}{q-1}}A^{q+\lambda}.
\end{equation}

From $A=X^{q}-1$, the
expression for $\lambda$ in~\eqref{lamda} shows that $\lambda$ is
monotonically decreasing in $X\ge 2$ (for $q\ge 7$), so
$\lambda<3.15$.  Therefore, ~\eqref{b upper bound} implies
\[
 B < 300\cdot A^{2.7},
\]
which contradicts~\eqref{b lower bound}.

\medskip
Similarly, if $q=5$ and $X\ge 3$, then $\lambda<2.8$, hence
\[
 B<1400\cdot A^{3.6},
\]
which again contradicts ~\eqref{b lower bound}.
However, there is no contradiction between the upper bound and lower bound for $B$ when $(X,q)=(2,q)$. We consider this case separately. Let $q=5$ and $X=2$, then
\begin{equation}
  31(Y^{5}-1)=Z^{5},
\end{equation}
and therefore
\[
  \left|\sqrt[5]{31}-\frac{Z}{Y}\right|<
    \frac{31^{1/5}}{5Y^{5}}.
\]
However, by Corollary~1.2 of \cite{Bennett:1997},
\[
  \left|\sqrt[5]{31}-\frac{Z}{Y}\right|    >    \frac{0.01}{Y^{2.83}},
\]
it follows that $Y\leq 6$. By a simple verification, the equation has no solution.

For the case $q=3$, we make a slight modification of the argument in
\cite{Bennett:2007}. Starting from the equation
\[
(X^{3}-1)Y^{3}-Z^{3}=X^{3}-1
\]
and writing $\alpha=(X^{3}-1)^{1/3}$, one obtains a very good rational
approximation $Z/Y$ to $\alpha$, with 
\begin{equation}\label{eq:basic-upper}
|\alpha-Z/Y|<\frac{A^{1/3}}{2.87Y^{3}}< \frac{1}{2Y^{2}} .   
\end{equation}
By Legendre’s
criterion, $Z/Y$ must be a convergent of the simple continued fraction of
$\alpha$.

Since $Z/Y<\alpha$, the index $j$ is even, which gives
\[
\Bigl|\alpha-\frac{p_j}{q_j}\Bigr|>\frac{1}{(a_{j+1}+2)q_j^2}.
\]
Combining this with \eqref{eq:basic-upper} and using $Y=q_j$, we obtain
\[
3A^{2/3}<Y<(a_{j+1}+2)A^{1/3}.
\]
Since $A=X^3-1$, this implies
\begin{equation}\label{eq:aj1-lb}
a_{j+1}>3X-3.
\end{equation}

For $X\ge 3$, the simple continued fraction of $\alpha=\sqrt[3]{X^3-1}$ begins
\[
a_0=X-1,\quad a_1=1,\quad a_2=3X^2-2,\quad a_3=1,\quad
a_4=X-2,\quad a_5=1.
\]
Thus, all odd partial quotients up to $a_5$ are equal to $1$. Since
\eqref{eq:aj1-lb} gives $a_{j+1}\ge 6$, our even index $j$ must satisfy
$j\ge 6$. Moreover, one computes the next partial quotients excluding $X \in \{2,3,5,7\}$,
\[
a_6=
\begin{cases}
(9X^2-4)/2,& X\equiv 0\pmod 2,\\[2pt]
(9X^2-3)/2,& X\equiv 1\pmod 2,
\end{cases}
\qquad
a_7=
\begin{cases}
1,& X\equiv 0\pmod 2,\\
2,& X\equiv 1\pmod 2.
\end{cases}
\]
Hence $a_7\le 2$, which is incompatible with \eqref{eq:aj1-lb} at $j=6$.
Therefore $j\ge 8$.

Using the explicit formulas for  $a_8$ and $q_8$ given in Bennett~\cite{Bennett:2007}, one finds that except for
\[
X\in\{2,3,5,7,9,11,15,17,19,21,25,27,31,37,41,47,57\},
\]
it follows that
\[
Y=q_j\ge q_8>5X^6.
\]
For the remaining values in this set with $X\ge 9$, one computes that
$a_9\le 10$, so again \eqref{eq:aj1-lb} forces $j\ge 10$, and hence
\[
Y\ge q_{10}>5X^6.
\]
For the four small remaining candidates $X\in\{2,3,5,7\}$, a direct
continued fraction computation shows that no convergent $p_j/q_j$ with
$q_j\le 5X^6$ yields an integer solution to
\[
(X^3-1)Y^3-Z^3=X^3-1.
\]
Therefore
\[
Y>5X^6
\]
holds for every $X\ge 2$. In particular,
\[
B=Y^3-1>125X^{18}-1>125A^6.
\]

Combining this with \eqref{b upper bound} for $q=3$ and for $X \geq 4$, we obtain
\[
A^{15-7\lambda}<2^{12}\cdot 3^{3/2}\cdot 5^{3\lambda-9},
\]
where
\[
\lambda=
1+\frac{\log\!\left(3\sqrt3\,(\sqrt A+\sqrt{A+1})^2\right)}
{\log\!\left(\frac{1}{3\sqrt3}(\sqrt A+\sqrt{A+1})^2\right)}.
\]
Since $A=X^3-1$, this is impossible for every $X\ge 16000$.

It remains to consider $4\le X\le 16000$. From equation~\eqref{size of Z},
we get
\[
Z < \left(8 \sqrt{3} A(A+1)\right)^{1 /(3-\lambda)} < 10^{31}.
\]
A direct computation of the continued fraction expansion of $\sqrt[3]{X^3-1}$
shows that no convergent $p_j/q_j$ with $p_j<10^{31}$ and $q_j>1$ satisfies
the necessary divisibility condition
\[
(X^3-1)q_j^3-p_j^3\mid (X^3-1).
\]
Hence, no solution exists for $4\le X\le 16000$.

Thus only $X\in\{2,3\}$ remain. For these, we use the explicit irrationality
measures in Corollary~1.2 of \cite{Bennett:1997}:
\[
\left|\sqrt[3]{7}-\frac{p}{q}\right|>\frac{0.08}{q^{2.70}},
\qquad
\left|\sqrt[3]{26}-\frac{p}{q}\right|>\frac{0.03}{q^{2.53}},
\]
for all positive integers $p$ and $q$. Combining these bounds with
\eqref{eq:basic-upper}, we obtain $Y\le 1172$ when $X=2$, and $Y\le 1859$
when $X=3$. A short computation shows that neither case yields a solution to
\eqref{XYZ}. Hence, the case $q=3$ cannot occur, and the proof of
Theorem~\ref{thm:qqq} is complete.

\section{Proof of Theorem~\ref{thm:theorem1}}

In this section, we obtain a conditional upper bound for $q$.
Let $p$ be a prime and $n\in\mathbb{Z}\setminus\{0\}$. The $p$-adic valuation $\nu_p(n)$
is the largest integer $e\ge 0$ such that $p^{e}\mid n$.  The following result, known as the Lifting-the-Exponent (LTE) Lemma \cite{LTE}, is a standard tool for estimating $p$-adic valuations
of exponential differences.

\begin{lemma}[Lifting-the-exponent Lemma \cite{LTE}]
\label{Lifting-the-exponent Lemma}
Let \( p \) be a prime, and let \( a \) and \( b \) be integers such that \( k \) is a positive integer. Suppose \( p \mid (a - b) \) and \( p \nmid ab \). Then, the \( p \)-adic valuation \( \nu_p \) of \( a^k - b^k \) is given by
\[
\nu_p(a^k - b^k) = 
\begin{cases}
\nu_p(a - b) + \nu_p(k), & \text{if } p \text{ is odd}, \\
\nu_2(a - b) , & \text{if } p = 2 \text{ and } k \text{ is odd}, \\
\nu_2(a^2 - b^2) + \nu_2\left(\frac{k}{2}\right), & \text{if } p = 2 \text{ and } k \text{ is even}.
\end{cases}
\]
\end{lemma}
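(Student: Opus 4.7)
The plan is to reduce the statement to a base lifting identity at exponent $p$ (or $2$) and then bootstrap by an induction on $\nu_p(k)$, combined with the easy case $\gcd(k,p)=1$. The proof naturally splits into the odd-prime case and the $p=2$ case.

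For an odd prime $p$, I would first prove the base identity $\nu_p(a^{p}-b^{p})=\nu_p(a-b)+1$. Writing $a=b+p^{e}c$ with $e=\nu_p(a-b)\ge 1$ and $\gcd(c,p)=1$, the binomial expansion
\[
a^{p}-b^{p}=\sum_{j=1}^{p}\binom{p}{j}b^{p-j}(p^{e}c)^{j}
\]
has its $j=1$ term of $p$-valuation exactly $e+1$, while every other term has strictly larger valuation (using $\nu_p(\binom{p}{j})=1$ for $1\le j\le p-1$ together with $e\ge 1$, and $pe\ge e+2$ for $j=p$). Iterating this gives $\nu_p(a^{p^{n}}-b^{p^{n}})=\nu_p(a-b)+n$. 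For a general exponent $k=p^{n}m$ with $\gcd(m,p)=1$, the telescoping factorisation $a^{m}-b^{m}=(a-b)\sum_{i=0}^{m-1}a^{m-1-i}b^{i}$ has cofactor $\equiv m\,a^{m-1}\pmod{p}$, which is a unit mod $p$; so $\nu_p(a^{m}-b^{m})=\nu_p(a-b)$, and applying the iterated step to the pair $(a^{m},b^{m})$ yields $\nu_p(a^{k}-b^{k})=\nu_p(a-b)+\nu_p(k)$.

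The $p=2$ case requires more care, since $2\mid(a-b)$ does not force $4\mid(a-b)$. If $k$ is odd, the same telescoping argument produces a cofactor that is a sum of an odd number of odd terms, hence odd, giving $\nu_2(a^{k}-b^{k})=\nu_2(a-b)$. If $k=2^{s}m$ with $m$ odd and $s\ge 1$, the main obstacle is to track the $2$-power contribution in the repeated-squaring factorisation
\[
A^{2^{s}}-B^{2^{s}}=(A-B)(A+B)\prod_{j=1}^{s-1}\bigl(A^{2^{j}}+B^{2^{j}}\bigr),
\]
with $A=a^{m}$ and $B=b^{m}$. Since $A,B$ are odd, each $A^{2^{j}}+B^{2^{j}}\equiv 2\pmod 4$ for $j\ge 1$, contributing exactly $1$ to $\nu_2$, so the product contributes $s-1$. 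Combined with $\nu_2((A-B)(A+B))=\nu_2(A^{2}-B^{2})=\nu_2(a^{2}-b^{2})$ (the latter via the odd-$k$ case applied to $A-B=a^{m}-b^{m}$ and $A+B=a^{m}+b^{m}$), one obtains $\nu_2(a^{k}-b^{k})=\nu_2(a^{2}-b^{2})+s-1=\nu_2(a^{2}-b^{2})+\nu_2(k/2)$.

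The hard part is precisely this asymmetry between $a+b$ and $a-b$ modulo powers of $2$; it is what forces $a^{2}-b^{2}$, rather than $a-b$, to appear on the right-hand side of the even-exponent formula. Once this observation is absorbed into the repeated-squaring factorisation above, the bookkeeping collapses into the same pattern as the odd-prime argument and completes the proof.
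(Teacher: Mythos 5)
Your proof is correct. Note, however, that the paper does not prove this lemma at all: it is quoted verbatim from the cited reference \cite{LTE} as a known standard tool, so there is no ``paper's proof'' to compare against. Your argument --- establishing the base identity $\nu_p(a^p-b^p)=\nu_p(a-b)+1$ via the binomial expansion, disposing of exponents coprime to $p$ through the telescoping cofactor $\sum_i a^{m-1-i}b^i\equiv m\,a^{m-1}\pmod p$, and handling $p=2$ separately through the repeated-squaring factorisation with each factor $A^{2^j}+B^{2^j}\equiv 2\pmod 4$ --- is the standard proof of the Lifting-the-Exponent lemma, and all the estimates you invoke ($1+ej\ge e+2$ for $2\le j\le p-1$, $ep\ge e+2$ for $j=p$, and the reduction of $\nu_2(a^m+b^m)$ to the odd-exponent case applied to $a^m-(-b)^m$) check out. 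The only cosmetic issue is the phrase ``integers such that $k$ is a positive integer'' in the statement itself, which is the paper's wording, not yours.
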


\begin{lemma}
\label{lemma 2}
  Assume that $(k,b, y, q)$ is a positive integer solution of equation~\eqref{main equation} with $q>\log_2(b+1)$. We have

\begin{enumerate}
  \item $\nu_{2}(k) >q- \log_2(b+1) >0$,
  \item $\nu_{3}(k) >0$.
\end{enumerate}  
\end{lemma}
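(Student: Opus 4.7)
My plan is to invoke the Lifting-the-Exponent Lemma (Lemma~\ref{Lifting-the-exponent Lemma}) at $p = 2$ for part (1) and at $p = 3$ for part (2), exploiting that $\nu_p(y^q) = q\,\nu_p(y)$ is either $0$ or at least $q$. LTE produces explicit formulas for $\nu_p(2^k - 1)$ and $\nu_p(b^k - 1)$, and the hypothesis $q > \log_2(b+1)$ is just strong enough to rule out all but the desired congruences.

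For part (1), since $b$ is odd the factor $2^k - 1$ is odd, so $\nu_2\bigl((2^k-1)(b^k-1)\bigr) = \nu_2(b^k - 1)$. I would first rule out odd $k$: by LTE, $\nu_2(b^k - 1) = \nu_2(b - 1)$ for odd $k$, which satisfies $1 \le \nu_2(b-1) \le \log_2(b-1) < q$ and hence cannot be a positive multiple of $q$. So $k$ is even, and LTE gives
\[
\nu_2(b^k - 1) = \nu_2(b - 1) + \nu_2(b + 1) - 1 + \nu_2(k).
\]
Since exactly one of $b \pm 1$ is $\equiv 2 \pmod 4$, the first three terms combine to $\max(\nu_2(b-1), \nu_2(b+1)) \le \log_2(b+1)$. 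Divisibility by $q$ forces $\nu_2(b^k - 1) \ge q$, whence $\nu_2(k) \ge q - \log_2(b+1) > 0$, with strictness away from the Mersenne-type configuration $b + 1 = 2^m$.

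For part (2), using that $k$ is even from part (1), LTE applied to base $4$ yields $\nu_3(2^k - 1) = \nu_3(4^{k/2} - 1) = 1 + \nu_3(k)$. The valuation $\nu_3(b^k - 1)$ splits into three cases according to $b \bmod 3$: it equals $0$ if $3 \mid b$; equals $\nu_3(b - 1) + \nu_3(k)$ if $b \equiv 1 \pmod 3$; and equals $\nu_3(b + 1) + \nu_3(k)$ if $b \equiv -1 \pmod 3$, by applying LTE to $(b^2)^{k/2} - 1$. Assuming for contradiction that $\nu_3(k) = 0$, the total $3$-adic valuation is $1$, $1 + \nu_3(b - 1)$, or $1 + \nu_3(b + 1)$ respectively — each positive. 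Divisibility by $q$ forces this quantity to be at least $q$, yielding $\nu_3(b \pm 1) \ge q - 1$ in the two nontrivial cases; but then $b + 1 \ge 3^{q - 1}$ combined with $b + 1 < 2^q$ contradicts the elementary bound $3^{q - 1} > 2^q$ valid for all odd primes $q \ge 3$. The case $3 \mid b$ is even easier, since the total value $1$ cannot be divisible by $q \ge 3$.

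The main technical obstacle is the Mersenne-type degeneracy in part (1), where the bound $\max(\nu_2(b-1), \nu_2(b+1)) \le \log_2(b+1)$ is attained exactly at $b = 2^m - 1$ and the strict inequality $\nu_2(k) > q - \log_2(b+1)$ becomes borderline — consistent with the exceptional list in Theorem~\ref{thm:theorem2} (which includes $b = 15, 1023, 4095$, each of the form $2^m - 1$). I expect this to be absorbed either by a slightly sharper $p$-adic argument or by deferring these Mersenne values to the case analysis of Theorem~\ref{thm:theorem2}.
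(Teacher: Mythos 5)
Your proof is correct and follows essentially the same route as the paper: LTE at $p=2$ to rule out odd $k$ and bound $\nu_2(k)$, then LTE at $p=3$ (via $4^{k/2}-1$ and a case split on $b \bmod 3$) for part (2); your contradiction step $3^{q-1}\le b+1<2^{q}$ is a clean way to finish and even avoids the paper's separate verification of $b=5$. Two minor remarks: the Mersenne degeneracy you flag is real but is equally present in the paper's own proof, which likewise only derives $\nu_2(k)\ge q-\log_2(b+1)$ rather than the strict inequality in the statement (only the non-strict form is used later, in Lemma~\ref{lemma 4}); and the paper's part (2) additionally records the quantitative bound $\nu_3(k)\ge\tfrac12\left(q-\log_3(b+1)-1\right)$, which Lemma~\ref{lemma 4} reuses, whereas your contradiction argument yields only $\nu_3(k)>0$.
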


\begin{proof}

For point (1), since $b$ is odd, we have $2\mid (b^{k}-1)$, hence $2\mid y$. Thus, 
    \[
    b^k - 1 \equiv 0 \pmod{2^q}.
    \]
Applying Lemma~\ref{Lifting-the-exponent Lemma}, we obtain
    \[
    q \leq \nu_2\left(y^q\right) = \nu_2\left(b^k - 1\right) =
    \begin{cases}
        \nu_2(b-1),& \text{if } k \text{ is odd},\\
        \nu_2(b^2-1)+\nu_2(k/2),& \text{if } k \text{ is even.}
    \end{cases}
    \] 
  The bounds $q\leq  \nu_2(b-1)$ and 
   $q>\log_2(b+1)$ are in contradiction.    Therefore $ 2 \mid k$ and 
   \[ \nu_2(k) \ge q  +1 - \nu_2(b^2-1) \ge q-\log_2(b+1).
\]
Here we used $\min \{\nu_2(b+1), \nu_2(b-1)\} =1$. 
 
When $p=3$ in point (2), the condition \( 2^k - 1 \equiv 0 \pmod{3} \) implies that  $3 \mid y$. 
We thus have
    \[
   (2^k - 1)(b^k-1) \equiv 0 \pmod{3^q}.
    \]
If $3\mid b$, another application of Lemma~\ref{Lifting-the-exponent Lemma} to $4^{k/2}-1$ yields $3^{q - 1} \mid  k$.
Otherwise,  \[
\nu_3(b^k-1)=\nu_3(b^2-1)+\nu_3(k/2)=\nu_3(b-1)+\nu_3(b+1)+\nu_3(k)\leq \log_3(b+1)+\nu_3(k),
\] since the odd prime $p$ cannot divide both $b+1$ and $b-1$ simultaneously. 
Hence 
\[
\nu_3(k)\geq\frac{q-\log_3(b+1)-1}{2}.
\]
The right hand side is positive when $b\ge 7$. 
The remaining case that $b=5$ satisfies $\nu_3(b+1)=1$, can be checked directly. This completes the proof of Lemma~\ref{lemma 2}. 
 
\end{proof}

\begin{lemma}
\label{lemma 3}
Assume that \( (k, b,y, q) \) is a quadruple of positive integers satisfying
equation~\eqref{main equation}. Let $p$ be an odd prime. If \( (p - 1)\mid k \), then
$$
\nu_{p}(k)>\frac{1}{2}\left(q- p\cdot\frac{\log 2b}{2\log p} \right).
$$    
In particular, if we further assume that $p\le q$ and $q>2\sqrt{2b}$, then $p \mid k$.  
\end{lemma}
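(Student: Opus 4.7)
The plan is to combine Fermat's little theorem with the Lifting-the-Exponent Lemma (Lemma~\ref{Lifting-the-exponent Lemma}) in order to extract the contribution of $\nu_{p}(k)$ from $\nu_{p}(y^{q})$, and then to bound the residual ``base-order'' valuations in terms of $b$ and $p$.

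First, since $p$ is odd and $(p-1)\mid k$, Fermat's little theorem gives $2^{k}\equiv 1\pmod p$, and if $p\nmid b$ also $b^{k}\equiv 1\pmod p$. In both cases $p\mid y^{q}$, hence $p\mid y$, so $p^{q}\mid (2^{k}-1)(b^{k}-1)$. Let $d_{1}=\ord_{p}(2)$ and, when $p\nmid b$, let $d_{2}=\ord_{p}(b)$; each $d_{i}$ divides $p-1$ and is thus coprime to $p$. Applying Lemma~\ref{Lifting-the-exponent Lemma} to the identities $2^{k}-1=(2^{d_{1}})^{k/d_{1}}-1$ and $b^{k}-1=(b^{d_{2}})^{k/d_{2}}-1$ gives
\begin{equation*}
\nu_{p}(2^{k}-1)=\nu_{p}(2^{d_{1}}-1)+\nu_{p}(k),\qquad \nu_{p}(b^{k}-1)=\nu_{p}(b^{d_{2}}-1)+\nu_{p}(k).
\end{equation*}
(If instead $p\mid b$, then $\nu_{p}(b^{k}-1)=0$, only the $2^{k}-1$ contribution survives, and the inequality below becomes strictly stronger.) Summing and using $p^{q}\mid y^{q}$ yields
\begin{equation*}
q\le \nu_{p}(2^{d_{1}}-1)+\nu_{p}(b^{d_{2}}-1)+2\nu_{p}(k).
\end{equation*}

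Second, I would bound the two base valuations. The elementary inequality $p^{\nu_{p}(a^{d}-1)}\le a^{d}-1$ gives $\nu_{p}(a^{d_{i}}-1)<d_{i}\log a/\log p$; combined with the divisibility $d_{i}\mid(p-1)$, the target estimate
\begin{equation*}
\nu_{p}(2^{d_{1}}-1)+\nu_{p}(b^{d_{2}}-1)\le \frac{p\log 2b}{2\log p}
\end{equation*}
can be extracted after a careful case analysis on the orders $d_{1},d_{2}$ (in particular whether $2$ and $b$ are quadratic residues modulo $p$, which constrains $d_{i}\mid(p-1)/2$). Substituting this into the previous inequality and rearranging produces the advertised
\begin{equation*}
\nu_{p}(k)>\frac{1}{2}\left(q-\frac{p\log 2b}{2\log p}\right).
\end{equation*}

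For the ``in particular'' clause, I would note that $x/\log x$ is increasing on $[e,\infty)$, so $p\le q$ implies $p/\log p\le q/\log q$, and hence
\begin{equation*}
\frac{p\log 2b}{2\log p}\le \frac{q\log 2b}{2\log q}<q,
\end{equation*}
the final inequality being equivalent to $q^{2}>2b$, which is implied by $q>2\sqrt{2b}$. Therefore $\nu_{p}(k)>0$, i.e.\ $p\mid k$. The LTE application is routine and the monotonicity argument for the ``in particular'' statement is straightforward; the main technical obstacle is justifying the sharp constant $p/(2\log p)$ in the base-term bound, since the crude estimate $(p-1)/\log p$ is too weak to yield $p\mid k$ in the borderline regime $q\approx 2\sqrt{2b}$ with $p$ close to $q$.
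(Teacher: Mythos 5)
Your overall strategy --- Fermat's little theorem plus LTE to isolate $2\nu_p(k)$ inside $\nu_p(y^q)\ge q$, followed by a size bound on the residual valuations --- is the same as the paper's, and your LTE step (via the orders $d_1,d_2$) and your treatment of the ``in particular'' clause (monotonicity of $x/\log x$ together with $q^2>8b>2b$) are both correct. However, there is a genuine gap at precisely the point you flag yourself: the inequality
\[
\nu_p\bigl(2^{d_1}-1\bigr)+\nu_p\bigl(b^{d_2}-1\bigr)\le \frac{p\log 2b}{2\log p}
\]
is the crux of the lemma and you do not prove it. The quadratic--residue dichotomy you propose settles only the easy half: when $2$ (say) is a residue mod $p$, then $d_1\mid (p-1)/2$ and the crude bound $\nu_p(2^{d_1}-1)\le \log(2^{d_1}-1)/\log p$ already gives the factor $\tfrac12$. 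But when $2$ is a nonresidue, $d_1$ is even and need not divide $(p-1)/2$ (it can be as large as $p-1$), and the same crude bound yields only $(p-1)\log 2/\log p$, which, as you note, is too weak by a factor of $2$ in the borderline regime.

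The missing idea, which the paper supplies, is to factor $a^{p-1}-1=\bigl(a^{(p-1)/2}-1\bigr)\bigl(a^{(p-1)/2}+1\bigr)$ and observe that the odd prime $p$ cannot divide both factors, since they differ by $2$. Hence
\[
\nu_p\bigl(a^{p-1}-1\bigr)=\nu_p\bigl(a^{(p-1)/2}-1\bigr)+\nu_p\bigl(a^{(p-1)/2}+1\bigr)\le \frac{\log\bigl(a^{(p-1)/2}+1\bigr)}{\log p}<\frac{p\log a}{2\log p},
\]
uniformly and with no case distinction on residuosity. (In your language: in the nonresidue case one has $d_i=2e$ with $e\mid(p-1)/2$, and $p$ divides $a^{e}+1$ but not $a^{e}-1$, so $\nu_p(a^{d_i}-1)=\nu_p(a^{e}+1)$ is again controlled by a number of size about $a^{(p-1)/2}$.) Once this half-size estimate is in place, your argument closes; the paper otherwise proceeds exactly as you do, working directly with the exponent $p-1$ rather than with the orders $d_1,d_2$, which makes the bookkeeping slightly cleaner.
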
 

\begin{proof}
  Let us begin by analyzing the \( p \)-adic valuation on both sides of equation~\eqref{main equation}. Assume that \( p - 1 \) divides \( k \). Then, by Fermat's Little Theorem, it follows that \( p \) divides both \( b^{p-1} - 1 \) and \( b^k - 1 \) for any integer \( b \) satisfying \( p \nmid b \). By applying Lemma~\ref{Lifting-the-exponent Lemma}, we obtain
\[
\nu_p\left(2^k - 1\right) = \nu_p\left(2^{p-1} - 1\right) + \nu_p\left(\frac{k}{p-1}\right) = \nu_p\left(2^\frac{p-1}{2} - 1\right) +\nu_p\left(2^\frac{p-1}{2} + 1\right)+ \nu_p(k),
\]

\[
\nu_p\left(b^k - 1\right) =\begin{cases}
    \nu_p\left(b^\frac{p-1}{2} - 1\right) + \nu_p\left(b^\frac{p-1}{2} + 1\right)+\nu_p(k), & \text{if } p \nmid b,\\
    0 , & \text{if } p \mid b.
\end{cases} 
\]
Next, we observe that
\[\nu_p\left(2^\frac{p-1}{2} - 1\right) +\nu_p\left(2^\frac{p-1}{2} + 1\right) < p \cdot \frac{\log 2}{2\log p}, \]
\[ \nu_p\left(b^\frac{p-1}{2} - 1\right) +\nu_p\left(b^\frac{p-1}{2} + 1\right) < p \cdot \frac{\log b}{2\log p}.
\]
As a consequence,
\[
\nu_p\left(\left(2^k - 1\right)\left(b^k - 1\right)\right) < p \cdot \frac{\log 2b }{2\log p} + 2 \nu_p(k).
\]
On the right hand side of equation~\eqref{main equation}, since \( p \mid y \), it follows that
\[
\nu_p\left(y^q\right) \geq q.
\]
Combining the inequalities derived above, we conclude
$$
\nu_{p}(k)>\frac{1}{2}\left(q- p\cdot\frac{\log 2b}{2\log p} \right).
$$ 
Assume that $p\le q$ and  $q>2\sqrt{2b}$. We have 
$$
\nu_{p}(k)>\frac{1}{2}\left(q-  p \cdot \frac{\log2b}{2\log p}\right) \geq \frac{1}{2}\left(q-  q \cdot \frac{\log2b}{2\log q}\right)>0,$$
which completes the proof of the lemma.
\end{proof}

\begin{lemma}\label{lemma 4}
Assume that \( (k, b,y, q) \) is a quadruple of positive integers solving
equation~\eqref{main equation}  with $q\ge 2\sqrt{2b}$, then we have \( q \mid k \).   
\end{lemma}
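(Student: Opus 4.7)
The plan is to reduce $q \mid k$ to the weaker statement $(q-1) \mid k$, because Lemma~\ref{lemma 3} with $p = q$ (permissible since $p = q \le q$ and $q \ge 2\sqrt{2b}$) yields $\nu_q(k) > \frac{q}{2}\bigl(1 - \frac{\log 2b}{2\log q}\bigr) > 0$ as soon as $(q-1) \mid k$; the positivity follows from $\log 2b < 2\log q$, which is just $q^2 > 2b$.

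To establish $(q-1) \mid k$, I will show by strong induction on primes $p$ with $2 \le p \le q$ that $\nu_p(k) \ge \lceil \log_p q \rceil$ when $p \le \sqrt{q}$, and $\nu_p(k) \ge 1$ when $p > \sqrt{q}$. This invariant suffices, because for any positive integer $N \le q-1$ one has $\nu_r(N) \le \log_r N < \log_r q$, and $\nu_r(N) \le 1$ whenever $r > \sqrt{q} \ge \sqrt{N}$; thus $(q-1) \mid k$ follows automatically once every prime up to $q$ has been processed.

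The base cases $p = 2, 3$ come from Lemma~\ref{lemma 2}: using $b \le q^2/8$ (equivalent to $q \ge 2\sqrt{2b}$), the bounds $\nu_2(k) \ge q - \log_2(b+1)$ and $\nu_3(k) \ge \frac{1}{2}(q - \log_3(b+1) - 1)$ easily dominate $\log_2 q$ and $\log_3 q$ respectively. For the inductive step on an odd prime $5 \le p \le q$, the invariant applied to every prime $r$ dividing $p-1$ gives $\nu_r(k) \ge \nu_r(p-1)$, so $(p-1) \mid k$; Lemma~\ref{lemma 3} then delivers both $p \mid k$ and the lower bound $\nu_p(k) > \frac{1}{2}(q - p \cdot \frac{\log 2b}{2\log p})$. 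A short estimate (again using $\log 2b < 2\log q$) shows this bound is $\ge \log_p q$ when $p \le \sqrt{q}$ and is $\ge 1$ when $p > \sqrt{q}$, so the invariant propagates.

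The main obstacle is the multiplicity bookkeeping: checking that the lower bounds from Lemmas~\ref{lemma 2} and~\ref{lemma 3} really dominate $\nu_r(p-1)$ for every pair of primes $r < p \le q$ along the induction. The split at $\sqrt{q}$ makes the verification uniform in the large-$q$ regime, while only a handful of small $q$ (where the constraint $b \le q^2/8$ already pins $b$ to very few admissible odd values) would need direct case-by-case verification.
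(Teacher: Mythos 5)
Your proposal is correct and follows essentially the same route as the paper: an induction over the primes $p\le q$, using Lemma~\ref{lemma 2} for the base cases $p=2,3$, Lemma~\ref{lemma 3} to propagate from $(p-1)\mid k$ to $p\mid k$ together with a valuation lower bound, and a split at $\sqrt{q}$ to handle the multiplicity bookkeeping, finishing with Lemma~\ref{lemma 3} at $p=q$. The only difference is cosmetic — you carry an explicit invariant $\nu_p(k)\ge\lceil\log_p q\rceil$ (resp.\ $\ge 1$) where the paper verifies $\nu_r(p_{i+1}-1)\le\nu_r(k)$ directly at each step.
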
 

\begin{proof}
    If \( q =3\), the result follows directly from Lemma~\ref{lemma 2} with $1+\log_2(b+1)<2\sqrt{2b}$. 
    Define \( P_n = \{ p_1=2,p_2=3, p_3, p_4, \ldots, p_n \} \) to be the set of all primes up to \( q \),  where \( p_3 = 5 < p_4 < \cdots < p_n = q \).
 To prove the result, it suffices to show that \( (p - 1) \mid k \) for each \( p \in P \), so that Lemma~\ref{lemma 3} can be applied.


Assume that for some \( i \), we already have \( (p_i-1) \mid k \). We aim to prove \mbox{\( (p_{i+1}-1) \mid k \)}. It suffices to establish that
\begin{equation}\label{2}
    \nu_r(p_{i+1} - 1) \leq \nu_r(k)
\end{equation}
for all \( r \in P_i \).

We first consider \( r = 2,3\). By  Lemma~\ref{lemma 2} and elementary monotonicity, 
\[
\nu_2(p_{i+1} - 1) \leq \frac{\log (p_{i+1} - 1)}{\log 2} \le \frac{\log (q-1)}{\log 2} < q-\log_2(b+1)\leq\nu_2(k) . 
\]

\[
\nu_3(p_{i+1} - 1) \leq \frac{\log (p_{i+1} - 1)}{\log 3} \le \frac{\log (q-1)}{\log 3} < \frac{q-\log_3(b+1)-1}{2}<\nu_3(k) . 
\]

Next, we consider \( r \geq 5 \). 
We distinguish two sub cases, according to whether \(r \le q^{1/2}\) or \(r > q^{1/2}\).

If \( r > q^{1/2} \), it follows that
   \[
   \nu_r(p_{i+1} - 1) \le \frac{\log(p_{i+1}-1)}{\log r} < \frac{\log q}{\log r} < \frac{\log r^2}{\log r} = 2.
   \]
   Since \(r \in  P_i = \{ 2,3, p_1, p_2, \ldots, p_i \} \), then \((r-1)\mid k\) by assumption. Thus, by Lemma~\ref{lemma 3}, we deduce that $r \mid k$.
   Therefore
   \[
   \nu_r(p_{i+1} - 1) \leq 1 \leq \nu_r(k).
   \]

If \( 5\leq r < q^{1/2} \), we claim that
   \[\frac{1}{2}\left(q- r\cdot\frac{\log 2b}{2\log r} \right)\geq \frac{\log (q-1)}{\log r}.
   \]
It suffices to prove    \[\frac{1}{2}\left(q- q^{\frac{1}{2}}\cdot\frac{\log \frac{q^2}{4}}{2\log q^{\frac{1}{2}}} \right)- \frac{\log (q-1)}{\log 5}\geq 0.
   \]
which indeed holds, as one verifies by examining the monotonicity of the left hand side. Therefore, 
we have shown that \( (p_{i+1} - 1) \mid k \). Consequently, by Lemma~\ref{lemma 3}, we conclude \( 
p_{i+1} \mid k \). This completes the induction, and hence every prime in the set \( P \) divides \( 
k \), including \( q \). This completes the proof.
\end{proof}
Now, assume that \( (k, b,y, q) \) is a quadruple of positive integers solving
equation~\eqref{main equation}  under the assumption of Theorem~\ref{thm:theorem1}.  By Lemma~\ref{lemma 4}, we have deduced \( q \mid k \). Define:
\[
(X, Y, Z) := \left(2^{k / q}, b^{k / q}, y\right);
\]
we then obtain a new triple of positive integers that satisfies the  Diophantine equation 
$$
(X^q-1)(Y^q-1)=Z^q.
$$
However, Theorem~\ref{thm:qqq} shows that this equation has no non-trivial positive integer solutions with odd prime $q$. 
This contradiction completes the proof of Theorem~\ref{thm:theorem1}. 

\section{Proof of Theorem~\ref{thm:theorem2}}
In this section, we sharpen the upper bound for $q$ for certain moderate values of the odd integer $b$. By Theorem~\ref{thm:theorem1}, we have shown that for every odd integer
\(3\le b<10^{6}\) there is no solution in which \(q\) is an odd prime with \(q>2828\).

\begin{lemma}
If $b$ is an odd integer with $b\le 10^6$, then the corresponding results in Lemmas~\ref{lemma 2}–\ref{lemma 4} hold for every prime $q\geq23$. Moreover, under these same conditions, the Diophantine equation \eqref{main equation} admits no solutions.
\end{lemma}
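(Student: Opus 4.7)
The plan is to extend the inductive scheme of Lemmas~\ref{lemma 2}--\ref{lemma 4} to the sharper range $q\ge 23$ under the added hypothesis $b\le 10^{6}$, and then to invoke Theorem~\ref{thm:qqq} as in Section~3. The first step is essentially automatic: for every odd $b\le 10^{6}$ one has $\log_2(b+1)<20$ and $\log_3(b+1)<13$, so the hypotheses of Lemma~\ref{lemma 2} are already satisfied for every $q\ge 23$, and one obtains explicit lower bounds on $\nu_2(k)$ and $\nu_3(k)$ (in particular $6\mid k$ with plenty of room), which supplies a comfortable base for the induction that follows.

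The heart of the argument is a numerically sharpened replay of Lemma~\ref{lemma 4}. Because $\log(2b)$ is now bounded by $\log(2\cdot 10^{6})<14.51$, the hypothesis of Lemma~\ref{lemma 3} simplifies to a condition of the shape $q>c\,p/\log p$ with an explicit absolute constant~$c$. First I would check this inequality prime by prime, starting from $p=5$ (where it already holds at $q=23$) and proceeding up to $p=q$, so as to carry through the inductive step $(p_{i+1}-1)\mid k\Rightarrow p_{i+1}\mid k$ with the same two-case split ($r<\sqrt{q}$ versus $r>\sqrt{q}$) used in Lemma~\ref{lemma 4}. For most of the range $23\le q\le 2828$ these inequalities go through uniformly and the induction terminates with $q\mid k$; from there, the triple $(X,Y,Z)=(2^{k/q},b^{k/q},y)$ is an integer solution of $(X^{q}-1)(Y^{q}-1)=Z^{q}$ with $1<X<Y$ and $q$ an odd prime, which Theorem~\ref{thm:qqq} rules out.

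The hard part will be the corner where $q$ is near the bottom of the range (say $23\le q\le 50$) and $b$ is close to $10^{6}$, since then the inequality $q>r\log(2b)/(2\log r)$ fails for one or two intermediate primes $r\in\{7,11,13,\dots\}$ and breaks a link of the inductive chain. My plan for these exceptional pairs is twofold: for $b<q^{2}/8$, Theorem~\ref{thm:theorem1} applies immediately; for the complementary band $q^{2}/8\le b\le 10^{6}$, I would replace each failed application of Lemma~\ref{lemma 3} by a direct LTE computation of $\nu_r(2^{k}-1)+\nu_r(b^{k}-1)$, combined with the congruence $\nu_r\bigl((2^k-1)(b^k-1)\bigr)\equiv 0\pmod{q}$ and the already-known lower bounds on $\nu_{r'}(k)$ for the smaller primes $r'$. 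In each such case this should force a very short finite list of admissible residues $b\bmod r$, which can then be cleared by a direct finite check, restoring $q\mid k$ and completing the proof via Theorem~\ref{thm:qqq}.
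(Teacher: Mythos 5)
Your skeleton is the right one (base case from Lemma~\ref{lemma 2}, the prime-by-prime induction of Lemma~\ref{lemma 4}, then Theorem~\ref{thm:qqq}), but the core of your plan misjudges where the difficulty lies and omits the one ingredient that actually makes the paper's argument work. The crude bound of Lemma~\ref{lemma 3} requires $q> p\cdot\log(2b)/(2\log p)$, i.e.\ roughly $q>7.26\,p/\log p$ when $b$ is near $10^{6}$. Taking $p=q$, this forces $\log q>7.26$, i.e.\ $q\gtrsim 1400$. So for $b$ close to $10^{6}$ the inequality fails not for ``one or two intermediate primes'' in a ``corner'' $23\le q\le 50$, but for essentially every prime $p$ between $7$ and $q$ throughout the range $23\le q\lesssim 1400$; and your first fallback (Theorem~\ref{thm:theorem1} when $b<q^{2}/8$) is by construction unavailable exactly on this band. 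Your second fallback --- a direct LTE computation --- is the right instinct, but as described (``a very short finite list of admissible residues $b\bmod r$'') it is not the correct mechanism: the quantity that matters is $\nu_p\bigl(b^{p-1}-1\bigr)$, which depends on $b$ modulo high powers of $p$, not on $b\bmod p$, and you never extract from it the quantitative lower bound on $\nu_r(k)$ that the inductive step $\nu_r(p_{i+1}-1)\le\nu_r(k)$ requires.

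What the paper actually does is verify by computer, once and for all, that $\nu_p\bigl(2^{p-1}-1\bigr)\le 2$ for every prime $p<2828$ (equality only at the Wieferich prime $1093$) and that $\nu_p\bigl(b^{p-1}-1\bigr)\le 11$ for all odd $b\le 10^{6}$ and all such $p$. Feeding these uniform bounds into LTE in place of the worst-case estimate of Lemma~\ref{lemma 3} gives $\nu_p(k)\ge\tfrac{1}{2}(q-11-2)\ge 5$ for every $q\ge 23$, while $\nu_r(p_{i+1}-1)\le 4$ for $r\ge 5$ because $5^{5}>2828>p_{i+1}-1$; this is precisely the margin that closes the induction. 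Without stating and using such a uniform computational bound, your argument does not go through on the band $q^{2}/8\le b\le 10^{6}$, which is where all the work is.
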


\begin{proof}
 Since $q\geq23>\log_2(b+1)$, we have $\nu_2(k)>1$ and $\nu_3(k)>0$. Therefore, Lemma~\ref{lemma 2} applies.
 
Then we investigate the exponents of prime divisors in $2^{k}-1$ and $b^{k}-1$. By Lemma~\ref{Lifting-the-exponent Lemma},
 \[
\nu_p\left(2^k - 1\right) = \nu_p\left(2^{p-1} - 1\right) + \nu_p\left(\frac{k}{p-1}\right) =\nu_p\left(2^{p-1} - 1\right) +  \nu_p(k),
\]
and $\nu_p\left(2^{p-1} - 1\right) =1 $ for  every prime $p<2828$ except for $p=1093$ where $\nu_{1093}\left(2^{1092} - 1\right)=2$. 
Similarly,
 \[
\nu_p\left(b^k - 1\right) = \nu_p\left(b^{p-1} - 1\right) + \nu_p\left(\frac{k}{p-1}\right) =\nu_p\left(b^{p-1} - 1\right) +  \nu_p(k).
\]
By computer calculation, we find that
$\nu_p\left(b^{p-1} - 1\right)\leq11$. Hence,
\begin{equation}\label{k lower bound}
    \nu_p(k)\geq \frac{q-\nu_p\left(b^{p-1} - 1\right)-\nu_p\left(2^{p-1} - 1\right)}{2}\geq 5,
\end{equation}
so Lemma~\ref{lemma 3} holds.

For Lemma~\ref{lemma 4}, assume that for some \( i \), we have \( (p_i-1) \mid k \). Our goal is to show that \( (p_{i+1}-1) \mid k \). It suffices to establish that
\begin{equation*}
    \nu_r(p_{i+1} - 1) \leq \nu_r(k)
\end{equation*}
for all \( r \in P_i \).
And it has been proved in~\eqref{k lower bound} that $\nu_r(k)\geq5$. 

We begin with the case \( r = 2,3\). By Lemma~\ref{lemma 2}, 
\[q-\log_2(b+1)\leq\nu_2(k). \] 
It is obvious that 
\[
\nu_2(p_{i+1} - 1) \leq \nu_2(q - 1) < q-\log_2(b+1) . 
\]
for all odd primes $q\ge 23$ and all $b\le 10^6$.
Similarly, 
\[
\nu_3(p_{i+1} - 1) \leq \nu_3(q - 1)  < \frac{q-\log_3(b+1)-1}{2}\leq\nu_3(k) . 
\]
for all odd primes $q\ge 23$ and $b\le 10^6$.

For \(r\ge5\), we have
\[
\nu_r(p_{i+1}-1)\leq \nu_5(q-1)\leq 4\leq \nu_r(k).
\]
Hence Lemma~\ref{lemma 4} holds, which means $q\mid k$. Using Theorem~\ref{thm:qqq}, we complete the proof of this lemma.
\end{proof}

Now, we prove Theorem~\ref{thm:theorem2}. 
\begin{proof}
There are a few possibilities for $3\leq q<23$.    By Lemma~\ref{lemma 2}, we have $\nu_2(k)>0$ and $\nu_3(k)>0$. Let \(p\) be a prime with \(3\le p\le q\).
We claim that if \((p-1)\mid k\) then \(p\mid k\). Since we already know that
\(3\mid k\), it remains to consider \(5\le p\le q<23\). Likewise,
\[
\nu_p\left(\left(2^k - 1\right)\left(b^k - 1\right)\right) =\nu_p\left((2^{p-1}-1)(b^{p-1}-1)\right)+2\nu_p(k)\geq q.
\]
Hence
\[
\nu_p(k)\geq \frac{q-\nu_p(2^{p-1}-1)-\nu_p(b^{p-1}-1)}{2}
\]
Since \(\nu_p\!\left(2^{p-1}-1\right)=1\), we only need to consider those \(b\) for which
\(\nu_p\!\left(b^{p-1}-1\right)\ge 4\) (as \(q\ge 5\)). A finite computer calculation shows that
there are no pairs \((b,p)\) with \(q-1-\nu_p\!\left(b^{p-1}-1\right)\le 0\) under this assumption.

Therefore, $q>\log_2(b+1)$, $3\mid k$ and $7\mid k$ when $q=3$ or $q=7$ since $2\mid k$ and $6\mid k$ respectively. Thus by
Theorem~\ref{thm:qqq} there is no solution to equation~\eqref{main equation}.

Now consider the case $q=5$. If $\nu_2(k)\geq 2$, then $4\mid k$ and hence $5\mid k$. Therefore, $\nu_2(k)=1$, which forces $\nu_2(b^2-1)=5n$, where $n\in \mathbb{Z}_{>0}$. In combination with $5=q>\log_2(b+1)$, the remaining unsolved cases are $b=15,17$.

Similarly, for $q=11$ and $q=13$ the remaining unsolved cases are $b=1023, 1025$ and $b=4095,4097$, respectively.

Next, consider the case $q=17$. If $\nu_2(k)\geq 4$, then $16\mid k$ and hence $17\mid k$. Therefore, $1\leq \nu_2(k)\leq 3$, which implies $15\leq \nu_2(b^2-1)\leq 17$ . In combination with $17=q>\log_2(b+1)$, the remaining unsolved cases are $b=t\cdot2^{14}\pm 1$ where $t=1,2,...,7$.

Finally, consider $q=19$. We have $\nu_3(k)\geq\frac{q-\log_3(b+1)-1}{2}=\frac{19-\log_3(b+1)-1}{2}>2$. Then $18\mid k$ and hence $19\mid k$.
By Theorem~\ref{thm:qqq}, there is no solution for equation~\eqref{main equation}.

The remaining unresolved instances, with  an odd prime $q$ and odd positive \mbox{\(b<10^{6}\)} where $q>\log_2(b+1)$ are summarized in the table. This completes the proof of Theorem~\ref{thm:theorem2}. 

\end{proof}

\section{Proof of Theorem~\ref{thm:2b}}
Assume that $(k,x,n)$ is a solution in positive integers of
\begin{equation}\label{eq:2bn}
(2^k-1)(b^k-1)=x^n,\quad k,n>1. 
\end{equation}
First, consider the case $n=2$. By Theorem~3.1 in~\cite{Luca-Walsh}, any solution of
\[
(2^k-1)(b^k-1)=x^2
\]
with odd \(3\le b<30\) must satisfy \(k=2\). Hence
\(
3(b^2-1)=x^2.
\)
A direct check shows that this occurs only for \(b=7\). 
Thus, we may assume that $n>2$ and $2\nmid n$.    Let $q$ be the least prime
divisor of $n$ and put $y=x^{n/q}$. Then $(k,y,q)$ is a solution in positive
integers of equation ~\eqref{main equation}, namely
$$
(2^k-1)(b^k-1) = y^q. 
$$
We now restrict to the values $b=5,7,11,13,21,23,27,29$. By Theorem~\ref{thm:theorem2} we have
\[
  q\le \log_2(b+1)<4,
\]
hence $q$ must be $3$, since $q$ is an odd prime.

For $b=5$, Theorem~\ref{thm:theorem2} yields
\[
  q<\log_2 6<3,
\]
which is impossible for an odd prime $q$. 

In the cases $b=7,11,13,21,23,27,29$, comparing the $2$-adic valuations on both sides of
\[
  (2^k-1)(b^k-1)=y^3
\]
shows that we must have $2\mid k$. 
Applying Lemma~\ref{Lifting-the-exponent Lemma} with $p=3$ gives
\[
  \nu_3(2^k-1)
    = \nu_3(4-1)+\nu_3(k/2)
    = 1+\nu_3(k)
\]
and
\[
  \nu_3(b^k-1)
    = \begin{cases} \nu_3(b-1)+\nu_3(k) & \text { if } b=7,13; \\
    \nu_3(b^2-1)+\nu_3(k) & \text { if } b=11,23,29;\\
    0& \text { if } b=21,27.
    \end{cases}
\]
Consequently,
\[
  \nu_3\bigl((2^k-1)(b^k-1)\bigr)
 = \begin{cases} 2+2\nu_3(k) & \text { if } b=7,13; \\
    2+2\nu_3(k) & \text { if } b=11,23,29;\\
    1+\nu_3(k)& \text { if } b=21,27.
    \end{cases}
\]
On the other hand,
\[
  \nu_3(y^q)=\nu_3(y^3)=3\,\nu_3(y),
\]
and the equality of both sides implies
\[
  3 = q \le \nu_3(y^q) = \begin{cases} 2+2\nu_3(k) & \text { if } b=7,13; \\
    2+2\nu_3(k) & \text { if } b=11,23,29;\\
    1+\nu_3(k)& \text { if } b=21,27.
    \end{cases},
\]
so $3\mid k$.
This contradicts Theorem~\ref{thm:qqq}, which states that the equation
$(X^q-1)(Y^q-1)=Z^q$ has no integer solution with $1<X\le Y$ and $q$ an odd
prime. 

Combining all the above cases, we conclude that for $b=5,7,11,13,21,23,27,29$ the Diophantine
equation
\[
  (2^k-1)(b^k-1)=x^n,\qquad n\ge 2,
\]
admits no solution in positive integers $(k,x,n)$ except for $(2^2-1)(7^2-1)=12^2$. This completes the proof of
Theorem~\ref{thm:2b}.

\section*{Acknowledgments} The authors express their gratitude to Prof. Preda Mih\u{a}ilescu for carefully reading a preliminary version of this paper and for his valuable comments and suggestions.
 The first   author acknowledges the support of the China Scholarship Council program (Project ID: 202106310023).  The second author was supported by the National Natural Science Foundation of China (Grant No. 12161001). 
 
\end{document}